\documentclass[12pt]{amsart}
\usepackage{amssymb}
\usepackage{amsfonts}
\usepackage{amssymb,latexsym}
\usepackage{enumerate}
\usepackage{mathrsfs}
\usepackage{url}
\usepackage{rotating}
\usepackage{lscape}
\allowdisplaybreaks

\makeatletter
\@namedef{subjclassname@2020}{%
	\textup{2020} Mathematics Subject Classification}
\makeatother

\newtheorem{theorem}{Theorem}[section]
\newtheorem{lemma}[theorem]{Lemma}

\newtheorem{corollary}[theorem]{Corollary}

\theoremstyle{definition}
\newtheorem{remark}[theorem]{Remark}
\newtheorem{definition}[theorem]{Definition}

\usepackage{array}
\def\bar{\tilde}
\def\ga{\gamma}

\numberwithin{equation}{section} 

\begin{document}

\def\t{\widetilde}
\def\tilde{\widetilde}
\def\ga{\gamma}
\def\d{{\rm d}}

\title[Uniqueness for $p$-adic zeta equations]
{Infinite-order $p$-adic differential equations for Hurwitz-type Euler zeta functions: Uniqueness and higher-dimensional extensions}

\author{Su Hu}
\address{Department of Mathematics, South China University of Technology, Guangzhou, Guangdong 510640, China}
\email{mahusu@scut.edu.cn}

\author{Min-Soo Kim}
\address{Department of Mathematics Education, Kyungnam University, Changwon, Gyeongnam 51767, Republic of Korea}
\email{mskim@kyungnam.ac.kr}

\subjclass[2020]{11S80, 11M35, 11B68, 11M41, 46S10}
\keywords{$p$-adic analysis, multiple Hurwitz-type Euler zeta functions, infinite-order linear differential equations, $p$-adic distributions, partial operator decomposition}

\begin{abstract}
We generalize the one-dimensional infinite-order linear differential equation satisfied by the $p$-adic Hurwitz-type Euler zeta function (Abh. Math. Semin. Univ. Hambg. 91: 117--135, 2021) to $n$ variables. By encoding the differential operator as a convolution with a distribution kernel, we introduce partial zeta functions and partial operators indexed by subsets of $\{1,\dots,n\}$. A tensor product expansion of the distribution kernel is established, whose Möbius inversion via the binomial theorem yields the higher-dimensional analogue of the original equation in the form of an alternating-sum identity, reducing to the one-dimensional case when $n=1$. Convergence of the resulting series is confirmed by non-Archimedean estimates. As a second main contribution, we prove that, under the condition $|a|_p > 2p^{1/(p-1)}$, within the Banach space of bounded analytic functions on $\mathbb Z_p$, the shifted $p$-adic Hurwitz-type Euler zeta function is the \emph{unique} solution to the one-dimensional equation. This uniqueness result establishes that the infinite-order $p$-adic differential equation admits at most one bounded analytic solution, and together with the existence theorem, it characterizes the shifted $p$-adic Hurwitz-type Euler zeta function uniquely. This sharply distinguishes it from the complex setting, where the operator series diverges on the  Hurwitz zeta function, and the formal kernel fails to converge in the usual spaces of analytic test functions.
\end{abstract}
\maketitle

\def\ord{\text{ord}_p}
\def\ordt{\text{ord}_2}
\def\o{\omega}
\def\la{\langle}
\def\ra{\rangle}
\def\Log{{\rm Log}\, \Gamma_{p,N}^*}
\def\ov{\bar}

\section{Introduction}\label{Section 1}

In 1900, Hilbert \cite{HilbertProb} asserted that $\zeta(s)$ is not a solution of any algebraic ordinary differential equation on its region of analyticity; this was later proved rigorously by Ostrowski \cite{Ostrowski1920}, who showed more generally that a Dirichlet series satisfying an algebraic differential equation must be of a very restricted form. This did not, however, foreclose the possibility of \emph{non-algebraic} or \emph{infinite-order} differential equations.

In 2015, Van Gorder \cite{VanGorder2015} took up this challenge and showed that the Riemann zeta function formally satisfies an infinite-order linear differential equation
\[
T[\zeta(s) - 1] = \frac{1}{s-1},
\]
where $T = \sum_{n=0}^{\infty} P_n(s) \exp(nD_s)$ is a differential operator of infinite order. Shortly after, Prado and Klinger-Logan \cite{Prado2020} extended this result to the ordinary Hurwitz zeta function $\zeta(s,a)$ and to Dirichlet $L$-functions. They proved that, in the complex plane, the series defining the operator $T$ applied to $\zeta(s,a)$ \emph{diverges everywhere}, so that the operator cannot be applied to it in the complex analytic sense.

In 2021, we \cite{HuKim2021} replaced the complex variable $s$ with a $p$-adic variable and considered the alternating Hurwitz-type Euler zeta function
\[
\zeta_{p,E}(s,a) = \sum_{n=0}^{\infty} \frac{(-1)^n}{(n+a)^s},
\]
which is the $p$-adic interpolation of the classical Euler zeta function (see \cite{KS} for the definition and properties of $\zeta_{p,E}(s,a)$). By exploiting the non-Archimedean inequality and the properties of the Teichmüller character $\omega_v(a)$, we proved that the $p$-adic analogue $T_p^a$ of Van Gorder's operator converges in the $p$-adic topology. The main theorem states that, for $s \in \mathbb{Z}_p$ with $s \neq 1$ and $a \in \mathbb{C}_p$ with $|a|_p > 1$,
\begin{equation}\label{1.1}
T_p^a\left[\zeta_{p,E}(s,a) - \langle a \rangle^{1-s}\right]
=
\frac{1}{s-1}\left( \langle a-1 \rangle^{1-s} - \langle a \rangle^{1-s} \right),
\end{equation}
where $\langle a \rangle$ denotes the projection onto the principal units (see \cite[Theorem 3.5]{HuKim2021}). In contrast with the complex case, due to the non-Archimedean property, the operator $T_p^a$ applied to $\zeta_{p,E}(s,a)$ converges $p$-adically in the region $s\in \mathbb{Z}_p$ with $s\neq 1$ and $a\in K$ with $|a|_p>1$, where $K$ is any finite extension of $\mathbb{Q}_p$ with ramification index over $\mathbb{Q}_p$ less than $p-1$ (see \cite[Corollary 3.8]{HuKim2021}).

In a companion paper \cite{HuKim2024}, we considered the function-field analogue of this problem. Let $K=\mathbb{F}_{q}(T)$ be the rational function field, $K_{\infty}=\mathbb{F}_{q}((\frac{1}{T}))$ be the completion of $K$ at the infinite place $\infty=\left(\frac{1}{T}\right)$ and $K_{\infty}^{*}=K_{\infty}\setminus\{0\}$ be the multiplicative group of the field $K_{\infty}$. By introducing a Hurwitz-type refinement of the Goss zeta function $\zeta_\infty(s_0, s, a, n)$ and an infinite-order linear difference operator $L$ (see \cite[(2.13)]{HuKim2024}), we established the difference equation
\[
L\left[ \zeta_\infty\left( \frac{1}{T}, s, a, 0 \right) \right]
=
\sum_{\gamma \in \mathbb{F}_q} \frac{1}{\langle a + \gamma \rangle^s}
\]
(see \cite[Theorem 2.1]{HuKim2024}). This result may be viewed as the positive-characteristic analogue of (\ref{1.1}), where differential operators are replaced by difference operators to accommodate the discrete component of the Goss plane $\mathbb{S} = K_\infty^* \times \mathbb{Z}_p$ (see \cite[Definition 2.1]{Goss}).

In this paper, we reinterpret \eqref{1.1} using the language of $p$-adic distributions, following the framework developed in \cite{Khrennikov1991superspace, Khrennikov1991, Khrennikov1994book}. This reformulation has two advantages: it reveals the equation as a convolution with a distribution kernel, and it extends naturally to several variables via tensor products. The key observation is that the exponential translation operator $\exp_p(nD_s)$ in \cite{HuKim2021} is the translation operator on the space of $p$-adic analytic functions, which in distribution theory is represented as convolution with a shifted Dirac delta.
Hence, the entire operator $T_p^a$ can be encoded as a single distribution kernel
\[
K_s(t) := \sum_{n=0}^{\infty} P_{p,n}^a(s) \, \delta(t - s - n),
\]
where the coefficients $P_{p,n}^a(s)$ are those defining $T_p^a$ (see Section~3). Consequently, (\ref{1.1}) is equivalent to the convolution equation
\[
(K_s * F)(s) = R(s), \quad F(s) := \zeta_{p,E}(s,a) - \langle a \rangle^{1-s},
\]
in the space of $p$-adic distributions. This reformulation reveals the equation as a linear functional equation, and the non-Archimedean topology ensures convergence of the kernel series whenever the coefficients decay $p$-adically, a phenomenon with no complex analogue.

The contrast with the Archimedean setting is instructive. There, the coefficients $p_n(s)$ do not tend to zero; indeed, for positive integer $s$, the series $\sum_{n=0}^\infty p_n(s)$ diverges \cite[Lemma~9]{Prado2020}. Consequently, the formal kernel
$\sum_{n=0}^\infty p_n(s)\delta(t-s-n)$
cannot be defined as a distribution in the usual spaces of analytic test functions, since the pairing with the constant function $1$ is not convergent. In the $p$-adic norm, however, the same coefficients tend to zero, so that the kernel becomes a well-defined $p$-adic distribution and the convolution equation is meaningful (see Remark \ref{Remark3.7}).

Beyond this reformulation, the distributional framework also allows us to address a fundamental question that remained open in \cite{HuKim2021}: is the solution to \eqref{1.1} unique? In Section~3.3 we prove that, under the condition
\[
|a|_p > 2\,p^{1/(p-1)},
\]
within the Banach space $\mathcal B$ of bounded analytic functions on $\mathbb Z_p$, the homogeneous equation $T_p^a[H]=0$ admits only the trivial solution $H\equiv0$. This condition is stronger than the convergence condition and ensures that the series defining the homogeneous operator has norm strictly less than one, making it a contraction on $\mathcal B$. Consequently, for such $a$, the shifted $p$-adic Hurwitz-type Euler zeta function
\[
F(s)=\zeta_{p,E}(s,a)-\langle a\rangle^{1-s}
\]
is the \emph{unique} bounded analytic solution to the non-homogeneous equation. This uniqueness result establishes that the infinite-order $p$-adic differential equation admits at most one bounded analytic solution, and together with the existence theorem, it characterizes the shifted $p$-adic Hurwitz-type Euler zeta function uniquely. This sharply distinguishes it from the complex setting, where the series defining the analogous infinite-order operator diverges when applied to the Hurwitz zeta functions (see \cite[Theorem 8]{Prado2020}).

Once phrased distributionally, a natural higher-dimensional generalization emerges. The original result is confined to a single variable $s$. However, the distribution kernel construction extends directly to multiple variables by taking tensor products of one-dimensional kernels. The multi-variable setting is not an artificial extension, because when expanding the product of translated zeta functions, each variable can be either ``activated'' (involving integration) or left at a constant background term. This selection is naturally encoded by subsets $I\subseteq\{1,\dots,n\}$, leading to a decomposition into $2^n$ partial operators. This tensor product expansion is formalized in Theorem~\ref{prop:tensor_main}(i).

Thus we may lift \eqref{1.1} to $n$ variables. Defining  the higher-dimensional $p$-adic Hurwitz-type Euler zeta function by
\[
\zeta_{p,E}^{(n)}(\mathbf S, a)
:= \int_{\mathbb Z_p^n} \prod_{i=1}^n \langle a + t_i \rangle^{1-s_i} \, d\mu_{-1}^{\otimes n},
\]
where $\mathbf S = (s_1,\dots,s_n) \in \mathbb Z_p^n$. For each subset $I\subseteq\{1,\dots,n\}$, we introduce a partial zeta function $Z_I(\mathbf S,a)$ that integrates only over variables in $I$ (see Definition~\ref{def:highzeta}), and a corresponding partial operator $T_p^{a,I}$ that acts as the one-dimensional $p$-adic Van Gorder operator on variables in $I$ and as multiplication by constants on the complement (see Definition~\ref{def:highop}). Our main result is the following higher-dimensional analogue of \eqref{1.1}:

\begin{equation}\label{main}
\sum_{I\subseteq\{1,\dots,n\}} (-1)^{n-|I|}
T_p^{a,I}\big[Z_I(\mathbf S,a)\big]
=
\prod_{i=1}^n
\left[
\frac{1}{s_i-1}
\left( \langle a-1\rangle^{1-s_i} - \langle a\rangle^{1-s_i} \right)
\right].
\end{equation}

This identity holds for all $s_i \in \mathbb{Z}_p$ with $s_i \neq 1$ for each $i$, and $a\in\mathbb C_p$ with $|a|_p>1$, in the sense of convergent $p$-adic series (see Theorem~\ref{prop:tensor_main}(ii)). It reduces exactly to the original one-dimensional theorem \eqref{1.1} when $n=1$, since the alternating sum then has only two terms ($I=\emptyset$ and $I=\{1\}$); see Remark~\ref{rem:degeneration}. For $n>1$, the right-hand side becomes a product of one-dimensional factors, reflecting the independent action of the operator on each variable. The convergence of the alternating sum is guaranteed by the non-Archimedean decay of the coefficients; precise conditions are given in Theorem~\ref{thm:conv}, where it is assumed that \(a\) lies in a finite extension \(K/\mathbb{Q}_p\) with ramification index \(e<p-1\). Under this hypothesis the one-dimensional estimate
\[
\left| \frac{\prod_{j=1}^{N-1}(s_i-1+j)}{N! \, \omega_v(a)^N} \right|_p \le p^{N v_p(a)} N
\]
ensures the required decay. The uniformity of convergence on compact subsets of $\mathbb Z_p^n$ is established in Theorem~\ref{thm:conv}.

The paper is organized as follows. Section \ref{Section 2}  collects the necessary preliminaries on \(p\)-adic zeta functions and \(p\)-adic distributions. Section \ref{Section 3}  reformulates the one-dimensional equation as a convolution with a distribution kernel and proves the uniqueness of bounded analytic solutions under the condition $|a|_p > 2p^{1/(p-1)}$. Section \ref{Section 4} develops the higher-dimensional generalization: we define partial zeta functions and partial operators, establish the tensor product expansion and its M\"obius inversion (Theorem~\ref{prop:tensor_main}), and prove the convergence of the alternating sum (Theorem~\ref{thm:conv}).

\section{Preliminaries}\label{Section 2}
\label{sec:prelim}

In this section, we collect the necessary background materials from two distinct but compatible theories: the $p$-adic analytic number theory of Hurwitz-type zeta functions (following our previous work \cite{HuKim2021, KS}), and the theory of $p$-adic generalized functions on superspace (following Khrennikov \cite{Khrennikov1991superspace, Khrennikov1991}).

\subsection{The fermionic $p$-adic integral and the Hurwitz-type Euler zeta function}
\label{subsec:fermionic_int}

We begin by recalling the decomposition of the multiplicative group $\mathbb{C}_p^\times$ that is central to the definition of $p$-adic zeta functions.

For $a \in \mathbb{Z}_p$ with $p \nmid a$, there exists a unique $(p-1)$-th root of unity $\omega(a) \in \mathbb{Z}_p$ such that 
\[
a \equiv \omega(a) \pmod{p}.
\]
This $\omega$ is the \emph{Teichm\"uller character}. We define the projection onto the principal units by $\langle a \rangle := \omega^{-1}(a) a$, so that $\langle a \rangle \equiv 1 \pmod{p}$.

Following Tangedal and Young \cite{Tangedal2011}, we extend these definitions to $\mathbb{C}_p^\times$. Let $\mu$ be the group of roots of unity of order prime to $p$. For $a \in \mathbb{C}_p$ with $|a|_p = 1$, there is a unique $\hat{a} \in \mu$ such that $|a - \hat{a}|_p < 1$; this $\hat{a}$ is the \emph{Teichm\"uller representative}. For general $a \in \mathbb{C}_p^\times$, write $a = p^r u$ with $r \in \mathbb{Q}$ and $|u|_p = 1$, and define $\hat{a} := \hat{u}$. Then we set
\begin{equation}
\omega_v(a) := p^{v_p(a)} \hat{a}, \quad \langle a \rangle := \frac{a}{\omega_v(a)}.
\end{equation}\label{omegav}
This yields the internal product decomposition
\begin{equation}
\mathbb{C}_p^\times \simeq p^{\mathbb{Q}} \times \mu \times D, \quad D := \{ a \in \mathbb{C}_p : |a-1|_p < 1 \},
\end{equation}
via $a \mapsto (p^{v_p(a)}, \hat{a}, \langle a \rangle)$.

For $s \in \mathbb{C}_p$, the two-variable function $\langle a \rangle^s$ is defined by the binomial series
\begin{equation}
\langle a \rangle^s := \sum_{n=0}^{\infty} \binom{s}{n} (\langle a \rangle - 1)^n,
\end{equation}
whenever this converges. For $s \in \mathbb{Z}_p$, the map $a \mapsto \langle a \rangle^s$ is locally analytic on each disc $\{a: |a-y|_p < |y|_p\}$.

In \cite{KS}, the $p$-adic Hurwitz-type Euler zeta function is defined by the fermionic $p$-adic integral on $\mathbb{Z}_p$. Let $UD(\mathbb{Z}_p)$ be the space of uniformly differentiable functions from $\mathbb{Z}_p$ to $\mathbb{C}_p$. The fermionic measure $d\mu_{-1}$ is defined by the limit of alternating sums:
\begin{equation}
I_{-1}(f) := \int_{\mathbb{Z}_p} f(t) \, d\mu_{-1}(t) = \lim_{r \to \infty} \sum_{k=0}^{p^r-1} f(k) (-1)^k, 
\label{eq:fermi_measure}
\end{equation}
where $f \in UD(\mathbb{Z}_p).$

\begin{definition}[$p$-adic Hurwitz-type Euler zeta function]
\label{def:zetapE}
For $a \in \mathbb{C}_p \setminus \mathbb{Z}_p$ and $s \in \mathbb{Z}_p$, the $p$-adic Hurwitz-type Euler zeta function is defined by
\begin{equation}
\zeta_{p,E}(s,a) := \int_{\mathbb{Z}_p} \langle a + t \rangle^{1-s} \, d\mu_{-1}(t).
\label{eq:HuKim_zeta}
\end{equation}
\end{definition}

\begin{remark}
The restriction $a \notin \mathbb{Z}_p$ ensures that $|a+t|_p > 1$ for all $t \in \mathbb{Z}_p$, so that the Teichm\"uller projection $\langle a+t \rangle$ is well-defined. This function interpolates the classical Euler zeta function at non-positive integers:
\[
\zeta_{p,E}(1-m, a) = \frac{1}{\omega_v^m(a)} E_m(a) = \frac{1}{\omega_v^m(a)} \zeta_E(-m, a),
\]
where $E_m(a)$ are the classical Euler polynomials.
\end{remark}
\subsection{\(p\)-adic distributions and the Dirac delta}\label{subsec:distributions}

To reformulate the differential equation as a convolution equation in Section~\ref{Section 3}, we need the basic language of \(p\)-adic distributions, following Khrennikov's framework \cite{Khrennikov1991superspace, Khrennikov1991}. Here we only recall the minimal necessary definitions and properties; for a comprehensive treatment see \cite{Khrennikov1994book}.

Let \(K\) be a finite extension of \(\mathbb Q_p\) (in practice, \(K=\mathbb C_p\) or a quadratic extension). Fix an integer \(n\ge 1\). Denote by \(A(Q_p^n,K)\) the space of entire analytic functions on \(Q_p^n\), i.e.,
\[
A := A(Q_p^n,K) = \bigcap_{\rho>0} A_\rho,
\]
where \(A_\rho\) is the Banach space of functions \(f(x)=\sum_{\alpha} f_\alpha x^\alpha\) with \(\|f\|_\rho = \sup_\alpha |f_\alpha|_p \rho^{|\alpha|}<\infty\). The projective limit topology makes \(A\) a non-Archimedean Fréchet space.
Similarly, let \(A_0 := A_0(Q_p^n,K)\) be the space of functions analytic in a neighbourhood of \(0\), endowed with the inductive limit topology
\[
A_0 = \varinjlim_{\rho\to 0} A_\rho.
\]

The topological dual spaces \(A'\) and \(A_0'\) are called the spaces of \(p\)-adic distributions (on \(Q_p^n\)) with compact support and with support at \(0\), respectively. The duality pairing is denoted by
\[
\langle \mu, f \rangle = \int_{Q_p^n} f(x)\,\mu(dx),
\]
where $\mu\in A'$ and $f\in A.$

The spaces $A$, $A_0$ and their duals $A'$, $A_0'$ were introduced and studied systematically by Khrennikov; see \cite[Ch.~2]{Khrennikov1994book} and \cite{Khrennikov1991superspace}. In particular, the convolution of distributions and its continuity properties used throughout this paper are established in \cite{Khrennikov1991superspace}.

A fundamental example is the Dirac delta distribution \(\delta_a\) concentrated at \(a\in Q_p^n\), defined by
\[
\langle \delta_a, f \rangle = f(a).
\]
Its derivatives \(\partial^\alpha \delta_a\) are given by
\[
\langle \partial^\alpha \delta_a, f \rangle = (-1)^{|\alpha|} (\partial^\alpha f)(a).
\]

The convolution of two distributions \(\mu,\nu\in A'\) is defined by
\[
\langle \mu * \nu, f \rangle = \int_{Q_p^n}\int_{Q_p^n} f(x+y)\,\mu(dx)\,\nu(dy),
\]
and it is well-defined (see Theorem 2.4 in \cite{Khrennikov1991superspace}). In particular,
\[
\delta_a * f = f(\cdot + a), \qquad \delta_a * \delta_b = \delta_{a+b}.
\]

The \(p\)-adic Laplace transform \(\mathcal L: A_0' \to A\) is an isomorphism (see Theorem 2.6 in \cite{Khrennikov1991superspace}), defined by
\[
\mathcal L(\mu)(y) = \langle \mu, e^{i\langle x,y\rangle}\rangle,
\]
where \(\langle x,y\rangle = \sum_j x_j y_j\) and \(e^z\) is the \(p\)-adic exponential (convergent for \(|z|_p<1\) under suitable conditions). For our purposes, the key property is that the Laplace transform converts convolution into ordinary multiplication:
\[
\mathcal L(\mu * \nu) = \mathcal L(\mu)\,\mathcal L(\nu).
\]

Now consider the one-dimensional case \(n=1\). The translation operator \(f(s)\mapsto f(s+n)\) is exactly the convolution with \(\delta_{n}\):
\[
(\delta_{n} * f)(s) = f(s+n).
\]
Moreover, the \(p\)-adic exponential of the derivative, \(\exp_p(nD_s)\), acts on analytic functions as the Taylor shift:
\[
\exp_p(nD_s) f(s) = \sum_{k=0}^\infty \frac{n^k}{k!} f^{(k)}(s) = f(s+n),
\]
provided the series converges. Thus, in the distribution sense, we have the operator identity
\[
\exp_p(nD_s) = \delta_{n} * (\cdot).
\]
This identification is the key that allows us to encode the entire infinite-order differential operator \(T_p^a\) as a single distribution kernel in Section~\ref{Section 3}.

For the multi-dimensional setting \(n\ge 1\), the same formalism applies componentwise. Given a multi-index \(\mathbf N=(N_1,\dots,N_n)\in\mathbb N^n\), the translation operator
\[
\exp_p\!\left(\sum_{i=1}^n N_i D_{s_i}\right) f(\mathbf S) = f(\mathbf S+\mathbf N)
\]
is precisely the convolution with the product of one-dimensional deltas:
\[
\delta_{\mathbf N} := \bigotimes_{i=1}^n \delta_{N_i}.
\]
These elementary facts will be used without further comment in the distributional reformulation that follows.

\begin{remark}
The spaces \(A\) and \(A_0\) are non-Archimedean analogues of the usual test-function spaces, and their duals provide a rigorous setting for \(p\)-adic integration theory. The convergence of infinite sums of distributions is governed by the strong non-Archimedean triangle inequality, which makes many series that diverge in the complex setting converge \(p\)-adically--a phenomenon exploited in the proof of Theorem~\ref{thm:conv}.
\end{remark}

\section{Distributional reformulation of the one-dimensional equation}\label{Section 3}
In this section we recast the one-dimensional theorem of our previous work \cite{HuKim2021} in the language of \(p\)-adic distributions, following Khrennikov's framework as recalled in Section~\ref{subsec:distributions}. This reformulation serves three purposes. First, it exposes the equation's true nature as a linear convolution equation with a distribution kernel, rather than a formal infinite-order differential operator. Second, it provides a pathway to the higher-dimensional generalizations developed in Section~\ref{Section 4}, that is, the kernel construction extends immediately to tensor products, and the partial operators emerge from the binomial expansion of the product. Third, the distributional framework allows us to prove a uniqueness theorem within the space of bounded analytic functions on \(\mathbb Z_p\), under the additional condition \(|a|_p > 2p^{1/(p-1)}\); specifically, the shifted \(p\)-adic Hurwitz-type Euler zeta function is the \emph{unique} bounded solution to the equation in this parameter range. This latter result establishes the existence and uniqueness of bounded analytic solutions to the infinite-order $p$-adic differential equation under the stated condition.

We begin in Subsection~\ref{subsec:1D_theorem} by recalling the main theorem of \cite{HuKim2021} and the explicit form of the coefficients \(P_{p,n}^a(s)\). In Subsection~\ref{subsec:kernel_construction}, we construct the distribution kernel \(K_s\) and show that the operator \(T_p^a\) is precisely convolution with this kernel. Finally, in Subsection~\ref{subsec:uniqueness}, we introduce the Banach space \(\mathcal B\) of bounded analytic functions on \(\mathbb Z_p\) and prove that the homogeneous equation \(T_p^a[H]=0\) admits only the trivial solution in this space, under the condition \(|a|_p > 2p^{1/(p-1)}\), yielding uniqueness of the full non-homogeneous solution.

\subsection{The one-dimensional theorem}\label{subsec:1D_theorem}
We begin by recalling the main result of \cite{HuKim2021}. Let
\[
F(s) := \zeta_{p,E}(s,a) - \langle a \rangle^{1-s}
\]
be the shifted \(p\)-adic Hurwitz-type Euler zeta function. Define the \(p\)-adic Van Gorder operator
\begin{equation}
T_p^a := \sum_{n=0}^{\infty} P_{p,n}^a(s) \exp_p(nD_s),
\label{eq:Tp_def_sec3}
\end{equation}
where
\[
P_{p,n}^a(s) :=
\begin{cases}
\dfrac{2}{s-1}, & n=0,\\[1.2ex]
\dfrac{1}{\omega_v(a)}, & n=1,\\[1.2ex]
\dfrac{1}{\omega_v^n(a)} \cdot \dfrac{\prod_{j=1}^{n-1}(s-1+j)}{n!}, & n \ge 2,
\end{cases}
\]
and \(\exp_p(nD_s)\) is the \(p\)-adic exponential translation operator, which acts on analytic functions by
\begin{equation}\label{exp-1}
\exp_p(nD_s) f(s) = f(s+n).
\end{equation}

\begin{theorem}[{\cite[Theorem 3.5]{HuKim2021}}]\label{thm:HK_1D}
For \(s \in \mathbb{Z}_p\) with \(s \neq 1\) and \(a \in \mathbb{C}_p\) with \(|a|_p > 1\), the following identity holds in the sense of convergent \(p\)-adic series:
\begin{equation}
T_p^a[F](s) = \frac{1}{s-1} \left( \langle a-1 \rangle^{1-s} - \langle a \rangle^{1-s} \right).
\label{eq:HK_orig}
\end{equation}
\end{theorem}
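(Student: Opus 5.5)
The plan is to verify \eqref{eq:HK_orig} directly: derive a pointwise binomial identity for the integrand $\langle a+t\rangle^{1-s}$, integrate it against $d\mu_{-1}$, and then subtract the same identity evaluated at $t=0$. By \eqref{exp-1}, $T_p^a[F](s)=\sum_{n\ge0}P_{p,n}^a(s)F(s+n)$, where
\[
F(s+n)=\int_{\mathbb Z_p}\langle a+t\rangle^{1-s-n}\,d\mu_{-1}(t)-\langle a\rangle^{1-s-n}.
\]
So it suffices to compute $\sum_n P_{p,n}^a(s)\zeta_{p,E}(s+n,a)$ and $\sum_n P_{p,n}^a(s)\langle a\rangle^{1-s-n}$ separately.

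\emph{Step 1 (Teichm\"uller bookkeeping).} Fix $t\in\mathbb Z_p$ and write $x=a+t$. Since $|a|_p>1\ge|t|_p$, we have $a+t=a(1+t/a)$ with $1+t/a\in D$. Hence $\omega_v(a+t)=\omega_v(a)$, and likewise $\omega_v(a+t-1)=\omega_v(a)$. It follows that
\[
\langle x-1\rangle=\langle x\rangle(1-x^{-1}),\qquad x^{-1}=\omega_v(a)^{-1}\langle x\rangle^{-1}.
\]
Since $|x^{-1}|_p<1$, the factor $1-x^{-1}$ is a principal unit.

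\emph{Step 2 (pointwise binomial identity).} For $s\in\mathbb Z_p$ the coefficients $\binom{1-s}{n}$ lie in $\mathbb Z_p$, so the binomial series for $(1-x^{-1})^{1-s}$ converges because $|x^{-1}|_p<1$. Using
\[
(-1)^n\binom{1-s}{n}=(s-1)\,\frac{\prod_{j=1}^{n-1}(s-1+j)}{n!}
\]
and Step 1, we obtain, for $s\ne1$,
\[
\frac{1}{s-1}\Bigl(\langle a+t-1\rangle^{1-s}-\langle a+t\rangle^{1-s}\Bigr)=\sum_{n\ge1}P_{p,n}^a(s)\,\langle a+t\rangle^{1-s-n}.
\]
The series converges uniformly in $t\in\mathbb Z_p$, since its terms are bounded by $|a|_p^{-n}$ up to a uniform constant.

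\emph{Step 3 (integration).} We integrate the identity of Step 2 against $d\mu_{-1}$ and interchange sum and integral; uniform convergence of locally analytic functions together with the continuity of $I_{-1}$ on $UD(\mathbb Z_p)$ justifies this. The fermionic integral satisfies $I_{-1}(g(\cdot+1))+I_{-1}(g)=2g(0)$, which follows from \eqref{eq:fermi_measure}. Applying this with $g(t)=\langle a-1+t\rangle^{1-s}$ gives
\[
\int\langle a+t-1\rangle^{1-s}\,d\mu_{-1}=2\langle a-1\rangle^{1-s}-\zeta_{p,E}(s,a).
\]
Therefore
\[
\sum_{n\ge1}P_{p,n}^a(s)\,\zeta_{p,E}(s+n,a)=\frac{2}{s-1}\Bigl(\langle a-1\rangle^{1-s}-\zeta_{p,E}(s,a)\Bigr).
\]
Adding the $n=0$ term $\frac{2}{s-1}\zeta_{p,E}(s,a)$ yields $\sum_{n\ge0}P_{p,n}^a(s)\zeta_{p,E}(s+n,a)=\frac{2}{s-1}\langle a-1\rangle^{1-s}$.

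\emph{Step 4 (the correction term).} Step 2 at $t=0$ gives $\sum_{n\ge1}P_{p,n}^a(s)\langle a\rangle^{1-s-n}=\frac{1}{s-1}\bigl(\langle a-1\rangle^{1-s}-\langle a\rangle^{1-s}\bigr)$. Adding the $n=0$ term shows
\[
\sum_{n\ge0}P_{p,n}^a(s)\,\langle a\rangle^{1-s-n}=\frac{1}{s-1}\Bigl(\langle a-1\rangle^{1-s}+\langle a\rangle^{1-s}\Bigr).
\]
Subtracting this from the result of Step 3 gives exactly the right-hand side of \eqref{eq:HK_orig}.

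The main obstacle I expect is the analytic justification in Steps 1--3. One must check that $\omega_v$ is constant on $a+\mathbb Z_p$, which is where $|a|_p>1$ is essential. One must also check that $\langle\cdot\rangle^{1-s}$ is multiplicative on principal units for $s\in\mathbb Z_p$; I would prove this by density from integer exponents. Finally, the interchange of $\sum_n$ with $I_{-1}$ requires uniform convergence in the $UD$-norm, not merely the sup-norm. For this I would bound the difference quotients of $\langle a+t\rangle^{1-s-n}$ by $C\,|a|_p^{-n}$ uniformly in $t$.
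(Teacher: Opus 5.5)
Your proof is correct. The paper gives no argument of its own for this statement. It quotes \cite[Theorem 3.5]{HuKim2021} and records only the coefficient estimate \eqref{est} to explain convergence, so what you have written is a self-contained derivation rather than an alternative to a proof in the text.

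Your mechanism has four steps: binomial expansion of $\langle a+t-1\rangle^{1-s}=\langle a+t\rangle^{1-s}(1-(a+t)^{-1})^{1-s}$, integration against $\mu_{-1}$, the relation $\zeta_{p,E}(s,a-1)+\zeta_{p,E}(s,a)=2\langle a-1\rangle^{1-s}$, and subtraction of the $t=0$ case. This explains where the coefficient $P^a_{p,0}(s)=2/(s-1)$ comes from. It also shows that Step~3 alone already gives the unshifted identity $T_p^a[\zeta_{p,E}(\cdot,a)](s)=\frac{2}{s-1}\langle a-1\rangle^{1-s}$; the shift by $\langle a\rangle^{1-s}$ is just the $t=0$ instance of Step~2.

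Your coefficient bound $|P^a_{p,n}(s)|_p\le|s-1|_p^{-1}|a|_p^{-n}$, which comes from $\binom{1-s}{n}\in\mathbb Z_p$, also differs from \eqref{est}. It has no factor $n$ but degenerates as $s\to1$, whereas \eqref{est} is uniform on all of $\mathbb Z_p$. Inserted into $\kappa_a$, your bound gives $\kappa_a(s)\le 1/(|a|_p-1)$ for odd $p$. That would relax the hypothesis of Lemma~\ref{lem:contraction} to $|a|_p>2$.

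Two small corrections.
\begin{enumerate}
\item The interchange in Step~3 is easier than you fear. Each Riemann sum in \eqref{eq:fermi_measure} has absolute value at most $\|f\|_\infty$, so $|I_{-1}(f)|_p\le\|f\|_\infty$, and uniform convergence in $t$ suffices. If you do use the UD-norm, it is the full term $P^a_{p,n}(s)\langle a+t\rangle^{1-s-n}$ whose difference quotients are $O(|a|_p^{-n})$. The difference quotients of $\langle a+t\rangle^{1-s-n}$ alone are only $O(|a|_p^{-1})$ and do not decay in $n$.
\item Deriving $I_{-1}(g(\cdot+1))+I_{-1}(g)=2g(0)$ from \eqref{eq:fermi_measure} uses $(-1)^{p^r}=-1$, i.e.\ $p$ odd. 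For $p=2$ the telescoped sum is $g(0)-g(2^r)\to0$. The paper tacitly assumes $p$ odd as well (cf.\ the claim $|(s-1)/2|_p\le1$ in the proof of Lemma~\ref{lem:contraction}), but you should state it.
\end{enumerate}
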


The convergence is guaranteed by the estimate (see \cite[(3.13)]{HuKim2021})
\begin{equation}\label{est}
\left| \frac{\prod_{j=1}^{n-1}(s-1+j)}{n! \, \omega_v^n(a)} \right|_p \le p^{n v_p(a)} n,
\end{equation}
which tends to zero because \(|a|_p>1\) implies \(v_p(a)<0\).

\subsection{The distribution kernel}\label{subsec:kernel_construction}
We now reinterpret the operator \(T_p^a\) as a convolution with a distribution kernel. Recall from Section~\ref{subsec:distributions} that the translation operator \(\exp_p(nD_s)\) is precisely convolution with the shifted Dirac delta:
\[
\exp_p(nD_s) f(s) = (\delta_{n} * f)(s) = f(s+n),
\]
where \(\delta_{n}\) denotes the delta distribution concentrated at \(n \in \mathbb{Z}_p \subset Q_p\).

For each fixed \(s \in \mathbb{Z}_p\) with \(s\neq 1\), define a distribution \(K_s \in A'(Q_p, \mathbb{C}_p)\) by
\begin{equation}
K_s(t) := \sum_{n=0}^{\infty} P_{p,n}^a(s) \, \delta(t - s - n),
\label{eq:kernel_def_sec3}
\end{equation}
where \(t\) is the variable of integration. This is a countable sum of point masses located at \(t = s+n\) \((n\ge 0)\).

\begin{lemma}\label{lem:kernel_well_defined}
For each \(s\) as above, the series in \eqref{eq:kernel_def_sec3} converges in the strong dual topology of \(A'\), hence \(K_s\) is a well-defined \(p\)-adic distribution.
\end{lemma}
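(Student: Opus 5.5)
The plan is to show that the partial sums $K_s^{(N)}:=\sum_{n=0}^{N}P_{p,n}^a(s)\,\delta_{s+n}$ form a Cauchy sequence in the strong dual topology of $A'=A'(Q_p,\mathbb C_p)$, and then use completeness of $A'$. Recall that the strong topology is given by the seminorms $\|\mu\|_B=\sup_{f\in B}|\langle\mu,f\rangle|_p$, where $B\subset A$ runs over bounded subsets. Since $A=\bigcap_\rho A_\rho$ is a Fréchet space, a set $B$ is bounded exactly when, for every $\rho>0$, we have $\sup_{f\in B}\|f\|_\rho=:C_\rho(B)<\infty$. By the non-Archimedean triangle inequality, it therefore suffices to prove two things: that $\|P_{p,n}^a(s)\,\delta_{s+n}\|_B\to0$ as $n\to\infty$ for every bounded $B$, and that the limit $\mu=\lim_N K_s^{(N)}$ exists and is continuous on $A$.

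The first step is a uniform bound on a single delta. For $f=\sum_k f_k x^k\in B$ and $x=s+n\in\mathbb Z_p$, we have $|x^k|_p\le1$, so
\[
|\langle\delta_{s+n},f\rangle|_p=|f(s+n)|_p\le\sup_k|f_k|_p=\|f\|_1\le C_1(B).
\]
This is the key point: every point mass sits in $\mathbb Z_p$, so the delta distributions $\delta_{s+n}$ are uniformly bounded, in the sense that $\|\delta_{s+n}\|_B\le C_1(B)$ holds independently of $n$. Consequently
\[
\|P_{p,n}^a(s)\,\delta_{s+n}\|_B\le |P_{p,n}^a(s)|_p\,C_1(B)\le n\,p^{n v_p(a)}\,C_1(B)
\]
for $n\ge2$, by the estimate \eqref{est}. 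This tends to $0$ because $v_p(a)<0$.

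Next, I handle convergence and continuity together. The ultrametric inequality gives $\|K_s^{(M)}-K_s^{(N)}\|_B\le\max_{N<n\le M}\|P_{p,n}^a(s)\,\delta_{s+n}\|_B$, which tends to $0$, so the partial sums are Cauchy uniformly on each bounded set. To avoid invoking completeness of $A'$ abstractly, I define $\langle K_s,f\rangle:=\sum_n P_{p,n}^a(s)\,f(s+n)$ directly. This series converges for each $f\in A$ because its terms tend to $0$. It also satisfies
\[
|\langle K_s,f\rangle|_p\le M_s\|f\|_1, \qquad M_s:=\sup_n|P_{p,n}^a(s)|_p<\infty .
\]
The topology of $A$ is finer than the one induced by $\|\cdot\|_1$, so $K_s$ is continuous and hence lies in $A'$. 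Convergence $K_s^{(N)}\to K_s$ in the strong topology then follows from the tail bound above.

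The only delicate point I expect is the correct description of bounded sets and the strong topology on $A'$, specifically the fact that $\|\cdot\|_1$ is one of the defining seminorms of $A$. The analytic heart of the argument, however, is just \eqref{est} combined with the fact that $s+n\in\mathbb Z_p$.
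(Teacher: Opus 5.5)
Your proof is correct and follows essentially the same route as the paper: pair $K_s$ against a test function, bound $|f(s+n)|_p$ uniformly in $n$ using $s+n\in\mathbb Z_p$, and use the estimate \eqref{est} with $v_p(a)<0$ to get termwise decay. Your explicit bound $|f(s+n)|_p\le\|f\|_1$ sharpens the paper's $\phi$-dependent constant $M_\phi$. It is exactly what justifies the paper's claim that the bound is uniform on bounded subsets of $A$, and so it delivers convergence in the strong dual topology, not just pointwise convergence of the pairings.
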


\begin{proof}
For any test function \(\phi \in A(Q_p,\mathbb C_p)\), the duality pairing is
\[
\langle K_s, \phi \rangle = \sum_{n=0}^{\infty} P_{p,n}^a(s) \, \phi(s+n).
\]
Since \(\phi\) is entire analytic, it is bounded on the compact set \(\mathbb Z_p\) (indeed, on any bounded subset), so there exists \(M_\phi>0\) such that \(|\phi(s+n)|_p \le M_\phi\) for all \(n\ge 0\). The estimate above gives
\[
|P_{p,n}^a(s) \phi(s+n)|_p \le M_\phi \, p^{n v_p(a)} n,
\]
and since \(v_p(a)<0\), the general term tends to zero \(p\)-adically. Thus the series converges in \(\mathbb C_p\), and the linear functional \(\phi \mapsto \langle K_s,\phi\rangle\) is continuous because the bound is uniform on bounded sets of \(A\). Hence \(K_s \in A'\).
\end{proof}

Now observe that for any analytic function \(f\in A\), the convolution \((K_s * f)\) is defined as
\[
(K_s * f)(s) := \langle K_s, \, f(\cdot) \rangle = \sum_{n=0}^{\infty} P_{p,n}^a(s) \, f(s+n) = T_p^a f(s).
\]
Thus the entire operator \(T_p^a\) is represented as convolution with the distribution kernel \(K_s\). More precisely, we have the operator identity
\begin{equation}
T_p^a f(s) = (K_s * f)(s),
\label{eq:kernel_conv_sec3}
\end{equation}
where $f\in A.$

Let
\[
F(s) := \zeta_{p,E}(s,a) - \langle a \rangle^{1-s}, \quad R(s) := \frac{1}{s-1}\left( \langle a-1 \rangle^{1-s} - \langle a \rangle^{1-s} \right).
\]
Then Theorem~\ref{thm:HK_1D} is precisely the statement
\begin{equation}
(K_s * F)(s) = R(s), 
\label{eq:conv_eq_sec3}
\end{equation}
where $s\in \mathbb Z_p$ and $\ s\neq 1.$

In other words, the one-dimensional \(p\)-adic differential equation is equivalent to the convolution equation
\[
K * F = R,
\]
where \(K\) is the distribution-valued kernel defined above.

\begin{remark}
The kernel \(K_s\) depends on the parameter \(s\) through both the coefficients \(P_{p,n}^a(s)\) and the positions of the delta masses. This is a standard feature of non-Archimedean pseudodifferential operators: the symbol of the operator is encoded in the distribution kernel. In the multi-dimensional setting of Section~\ref{Section 4}, this kernel will be replaced by a product of such kernels, leading naturally to partial operators indexed by subsets of variables.
\end{remark}

\begin{remark}
It is worth emphasising that the convergence of the kernel in \(A'\) is a purely non-Archimedean phenomenon: the estimate \(p^{n v_p(a)} n \to 0\) is possible only because the \(p\)-adic norm is ultrametric and allows exponential decay. In the complex setting, the analogous sum would diverge, which explains why Van Gorder's formal identity lacks analytic meaning over \(\mathbb C\).
\end{remark}

Thus, by encoding the infinite-order differential operator as a distribution kernel, we have laid the foundation for a natural generalization to higher dimensions: in the next section, we will extend this kernel to a product of one-dimensional kernels, and the resulting partial operators will emerge from the decomposition of the product according to which variables are
``translated" and which are ``multiplied" by constants.

\subsection{Unique bounded solution of the one-dimensional equation}
\label{subsec:uniqueness}

In this subsection we prove that the solution obtained in Theorem~\ref{thm:HK_1D} is in fact the \emph{unique} bounded analytic solution of the convolution equation \eqref{eq:conv_eq_sec3}. This uniqueness result is not automatic from the construction; it depends crucially on the choice of the function space. We show that within the Banach space of bounded analytic functions on $\mathbb Z_p$, the homogeneous equation admits only the trivial solution. The proof uses a direct contraction argument rather than the iterated recurrence of the original version.

We begin by defining the function space that is appropriate for our uniqueness theorem.

\begin{definition}[Space of bounded analytic functions]
\label{def:Bspace}
Let
\[
\mathcal B := \left\{ f:\mathbb Z_p \to \mathbb C_p \;\middle|\; f \text{ is analytic on } \mathbb Z_p \text{ and } \|f\|_\infty := \sup_{s\in\mathbb Z_p} |f(s)|_p < \infty \right\}.
\]
Equivalently, $\mathcal B$ is the Tate algebra of power series $\sum_{m=0}^{\infty} c_m (s-1)^m$ with coefficients satisfying $|c_m|_p \to 0$ as $m\to\infty$, equipped with the sup-norm $\|\cdot\|_\infty$. This makes $\mathcal B$ a non-Archimedean Banach space.
\end{definition}

Throughout this subsection we work exclusively in $\mathcal B$. The restriction $s\neq 1$ is harmless: any $f\in\mathcal B$ is bounded on the punctured domain $\mathbb Z_p\setminus\{1\}$ as well.

We now introduce the key contraction condition. For the coefficients of the operator $T_p^a$, define
\begin{equation}
\kappa_a(s) := \left|\frac{s-1}{2}\right|_p \sum_{n=1}^{\infty} |P_{p,n}^a(s)|_p .
\label{eq:kappa}
\end{equation}

\begin{lemma}[Contraction condition]
\label{lem:contraction}
Suppose that
\[
|a|_p > 2\,p^{1/(p-1)}.
\]
Then there exists a constant $0<q<1$ such that
\[
\sup_{s\in\mathbb Z_p\setminus\{1\}} \kappa_a(s) \le q.
\]
\end{lemma}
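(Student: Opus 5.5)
The plan is to bound each term $|s-1|_p\,|P_{p,n}^a(s)|_p$ by a geometric quantity in $|a|_p^{-1}$ that does not depend on $s$, and then sum the resulting real geometric series. Write $r:=|a|_p>1$. Since $\langle a\rangle$ is a principal unit, $|\omega_v(a)|_p=|a|_p=r$, so $|\omega_v^n(a)|_p=r^n$. The factor $|s-1|_p$ in $\kappa_a(s)$ is what makes a uniform bound possible. It combines with the product $\prod_{j=1}^{n-1}(s-1+j)$ to form a full rising factorial, which can be controlled by $|n!|_p$ without the crude estimate \eqref{est}.

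\emph{Step 1 (the terms with $n\ge 2$).} For $n\ge2$ we have
\[
(s-1)\prod_{j=1}^{n-1}(s-1+j)=\prod_{j=0}^{n-1}(s-1+j)=(-1)^n\, n!\binom{1-s}{n}.
\]
Since $s\in\mathbb Z_p$, we have $1-s\in\mathbb Z_p$. The binomial coefficient $\binom{1-s}{n}$ lies in $\mathbb Z_p$ because $x\mapsto\binom{x}{n}$ is continuous and $\mathbb Z$-valued on the dense subset $\mathbb Z\subset\mathbb Z_p$. It follows that
\[
|s-1|_p\,|P_{p,n}^a(s)|_p=\frac{|n!|_p\,\bigl|\binom{1-s}{n}\bigr|_p}{|n!|_p\, r^n}\le r^{-n}.
\]
The factorials cancel exactly, which is the point of multiplying by $|s-1|_p$.

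\emph{Step 2 (the term with $n=1$).} Here $|s-1|_p\,|P_{p,1}^a(s)|_p=|s-1|_p/r\le r^{-1}$, because $|s-1|_p\le1$.

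\emph{Step 3 (summing).} Combining the two steps, for every $s\in\mathbb Z_p\setminus\{1\}$,
\[
\kappa_a(s)\le |2|_p^{-1}\sum_{n=1}^\infty r^{-n}=\frac{|2|_p^{-1}}{r-1}=:q .
\]
The series is an ordinary convergent real series since $r>1$. It remains to check $q<1$ by cases on $p$.
\begin{itemize}
\item For $p$ odd, $|2|_p=1$, and the hypothesis gives $r>2p^{1/(p-1)}>2$. Hence $q=1/(r-1)<1$.
\item For $p=2$, $|2|_2^{-1}=2$, and the hypothesis reads $r>2\cdot 2=4$. Hence $q=2/(r-1)<2/3$.
\end{itemize}
Clearly $q>0$, and $q$ is independent of $s$, which gives the required supremum bound.

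The only step needing care is Step 1, namely the integrality of $\binom{1-s}{n}$ for $s\in\mathbb Z_p$. I will justify it by density and continuity as above. If that turns out to be insufficient, the fallback is to use \eqref{est} directly. That route gives only $|P_{p,n}^a(s)|_p\le n r^{-n}$ together with the $p^{1/(p-1)}$ growth of $1/|n!|_p$. Its geometric sum $\sum n r^{-n}$ explains the factor $p^{1/(p-1)}$ in the hypothesis, but it makes the bound $q<1$ tighter to verify. For that reason I will pursue the factorial-cancellation route first.
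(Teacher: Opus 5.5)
Your proof is correct, and it takes a sharper route than the paper's.

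\textbf{The paper's route.} It never uses the factor $|s-1|_p$ inside the sum. It bounds $|P_{p,n}^a(s)|_p\le\beta^n$ with $\beta=p^{1/(p-1)}/|a|_p$, using only $\bigl|\prod_{j=1}^{n-1}(s-1+j)\bigr|_p\le 1$ and the bound $1/|n!|_p\le p^{n/(p-1)}$. It then sums to $\beta/(1-\beta)$ and discards the prefactor via $|(s-1)/2|_p\le 1$. This is the source of both the $p^{1/(p-1)}$ and the factor $2$ in the hypothesis, which together give $\beta<1/2$.

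\textbf{Your route.} You absorb $|s-1|_p$ into the sum, so that $n$ consecutive factors appear and $n!$ cancels exactly through $\binom{1-s}{n}\in\mathbb Z_p$. This gives the $s$-independent bound $\kappa_a(s)\le |2|_p^{-1}/(|a|_p-1)$ with no $p^{1/(p-1)}$ loss.

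\textbf{What this buys.} First, it shows the hypothesis is far from sharp: $|a|_p>2$ suffices for odd $p$, and $|a|_2>3$ suffices for $p=2$.

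Second, and more importantly, it handles $p=2$ correctly, where the paper's argument has a gap. The paper's step $|(s-1)/2|_p\le 1$ is false there: for even $s$ (e.g.\ $s=0$) one has $|(s-1)/2|_2=2$. The paper's bounds therefore only give $\kappa_a(s)\le 2\beta/(1-\beta)$, which is $<1$ only when $|a|_2>6$. That leaves the range $4<|a|_2\le 6$ (e.g.\ $|a|_2=2^{5/2}$) unproved. Your explicit tracking of $|2|_p^{-1}$ closes this gap, so your argument establishes the lemma as stated for every prime $p$.
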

\begin{remark}[Comparison with the Archimedean case]
\label{Remark3.7}
The contraction condition in Lemma~\ref{lem:contraction} is the analytic core of the uniqueness argument. It relies crucially on the fact that in the $p$-adic norm the coefficients $P_{p,n}^a(s)$ tend to zero, so that the series $\sum_{n=1}^\infty |P_{p,n}^a(s)|_p$ converges. In the Archimedean setting, the corresponding coefficients $p_n(s)$ behave very differently, that is, for positive integer $s$, the series $\sum_{n=0}^\infty p_n(s)$ diverges \cite[Lemma~9]{Prado2020}; more generally, for $s\in\mathbb{R}$ and $s>1$, the same divergence holds \cite[Corollary~10]{Prado2020}. Moreover, for $s\in\mathbb{C}$ and Re$(s)>1$, the series is not even absolutely convergent \cite[Corollary~11]{Prado2020}. Consequently, the formal kernel $\sum_{n=0}^\infty p_n(s)\delta(t-s-n)$ cannot be defined as a distribution in the corresponding Archimedean spaces of analytic test functions, in which the constant function is admissible and the pairing with it is required to converge. Thus the non-Archimedean nature of the $p$-adic norm is essential for the uniqueness result established in Theorem~\ref{thm:unique1d}.
\end{remark}
\begin{proof}[Proof of Lemma \ref{lem:contraction}]
For $n\ge2$, we have
\[
|P_{p,n}^a(s)|_p
= \left| \frac{1}{\omega_v^n(a)} \frac{\prod_{j=1}^{n-1}(s-1+j)}{n!} \right|_p
\le \left(\frac{p^{1/(p-1)}}{|a|_p}\right)^n ,
\]
because $$\frac{1}{|n!|_p} \leq p^{n/(p-1)}$$ (see \cite[p. 21, Lemma 3]{Iw} and $|\omega_v(a)|_p=|a|_p$ (by (\ref{omegav})). For $n=1$, the same bound holds trivially. Thus
\[
\sum_{n=1}^{\infty} |P_{p,n}^a(s)|_p
\le \sum_{n=1}^{\infty} \left(\frac{p^{1/(p-1)}}{|a|_p}\right)^n
= \frac{\beta}{1-\beta},
\]
where
\[
\beta := \frac{p^{1/(p-1)}}{|a|_p}.
\]
Moreover, for $s\in\mathbb Z_p\setminus\{1\}$, we have
\[
\left|\frac{s-1}{2}\right|_p \le 1,
\]
with equality for many values of $s$. Hence
\[
\sup_{s\in\mathbb Z_p\setminus\{1\}} \kappa_a(s)
\le \frac{\beta}{1-\beta}.
\]
If $|a|_p > 2\,p^{1/(p-1)}$, then $\beta < 1/2$, so
\[
\frac{\beta}{1-\beta} < 1.
\]
Thus the claim follows with $q = \beta/(1-\beta)$.
\end{proof}

We now state and prove the main result of this subsection.

\begin{theorem}[Uniqueness]
\label{thm:unique1d}
Let $|a|_p > 2\,p^{1/(p-1)}$. Suppose $H\in\mathcal B$ satisfies the homogeneous equation
\begin{equation}
T_p^a[H](s)=0, \qquad s\in\mathbb Z_p\setminus\{1\},
\label{eq:hom_3.3}
\end{equation}
where $T_p^a$ is defined in \eqref{eq:Tp_def_sec3}. Then $H\equiv0$ on $\mathbb Z_p$.

Consequently, the non-homogeneous equation
\[
T_p^a[F](s)=R(s), \qquad R(s):=\frac{1}{s-1}\left( \langle a-1\rangle^{1-s} - \langle a\rangle^{1-s} \right),
\]
has at most one solution in $\mathcal B$; by Theorem~\ref{thm:HK_1D}, that unique solution is precisely
\[
F(s)=\zeta_{p,E}(s,a)-\langle a\rangle^{1-s}.
\]
\end{theorem}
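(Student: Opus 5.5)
The plan is a direct contraction estimate in $\mathcal B$ based on Lemma~\ref{lem:contraction}: isolate the $n=0$ term of the homogeneous equation and bound $H$ by the remaining terms. First, the equation makes sense for $H\in\mathcal B$. For $s\in\mathbb Z_p$ and $n\ge0$ we have $s+n\in\mathbb Z_p$, so each value $H(s+n)$ is defined, and by \eqref{exp-1} we have $\exp_p(nD_s)H(s)=H(s+n)$. Moreover $|P_{p,n}^a(s)H(s+n)|_p\le\|H\|_\infty\,|P_{p,n}^a(s)|_p\to0$ by \eqref{est}. Hence $T_p^a[H](s)=\sum_{n\ge0}P_{p,n}^a(s)H(s+n)$ is a convergent $p$-adic series for every $s\ne1$, and $T_p^a$ is linear on $\mathcal B$.

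Now fix $s\in\mathbb Z_p\setminus\{1\}$. Since $P_{p,0}^a(s)=2/(s-1)$, the homogeneous equation \eqref{eq:hom_3.3} can be solved for $H(s)$:
\[
H(s)=-\frac{s-1}{2}\sum_{n=1}^{\infty}P_{p,n}^a(s)\,H(s+n).
\]
The strong triangle inequality then gives
\[
|H(s)|_p\le\left|\frac{s-1}{2}\right|_p\sup_{n\ge1}|P_{p,n}^a(s)|_p\,\|H\|_\infty\le\kappa_a(s)\,\|H\|_\infty\le q\,\|H\|_\infty,
\]
with $q<1$ taken from Lemma~\ref{lem:contraction}. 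The lemma's hypothesis is exactly $|a|_p>2p^{1/(p-1)}$, and the factor $|(s-1)/2|_p$, which matters when $p=2$, is already absorbed into $\kappa_a$.

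Next I take the supremum over $s\ne1$. Since $H$ is analytic, hence continuous, on $\mathbb Z_p$, and $1$ is not an isolated point of $\mathbb Z_p$, the supremum over $\mathbb Z_p\setminus\{1\}$ equals $\|H\|_\infty$. Therefore $\|H\|_\infty\le q\|H\|_\infty$ with $q<1$, which forces $\|H\|_\infty=0$, i.e.\ $H\equiv0$.

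For the non-homogeneous statement, suppose $F_1,F_2\in\mathcal B$ both satisfy $T_p^a[F_i]=R$ on $\mathbb Z_p\setminus\{1\}$. By linearity (the series converge termwise), their difference solves \eqref{eq:hom_3.3}, so $F_1=F_2$. Theorem~\ref{thm:HK_1D} then identifies the solution as $F$, provided $F\in\mathcal B$.

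I expect the main obstacle to be this last membership check, namely that $F(s)=\zeta_{p,E}(s,a)-\langle a\rangle^{1-s}$ is bounded and analytic on $\mathbb Z_p$. My plan is to write $\langle a+t\rangle^{1-s}=\exp\bigl((1-s)\log\langle a+t\rangle\bigr)$, where $|\log\langle a+t\rangle|_p$ is small uniformly in $t\in\mathbb Z_p$ because $|a|_p>1$. Expanding in powers of $(s-1)$ then gives coefficients bounded uniformly in $t$, and integrating term by term against $\mu_{-1}$ yields a power series in $(s-1)$ whose coefficients tend to zero. Here $|\mu_{-1}|$ is bounded because the fermionic integral is an actual measure. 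If the radius of convergence turns out to be tight, I would fall back on the analyticity of $\zeta_{p,E}(s,a)$ in $s$ established in \cite{KS}, together with compactness of $\mathbb Z_p$, to get boundedness.
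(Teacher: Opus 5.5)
Your treatment of the homogeneous equation and of the ``at most one solution'' clause is exactly the paper's argument: isolate the $n=0$ term, bound by $\kappa_a(s)\|H\|_\infty$, apply Lemma~\ref{lem:contraction}, take the supremum over $s\neq1$ using continuity, and subtract two solutions. Your ultrametric form is slightly sharper. Since $|P_{p,n}^a(s)|_p\le\beta^n\le\beta$ with $\beta=p^{1/(p-1)}/|a|_p<1/2$, it gives $|H(s)|_p\le|2|_p^{-1}\beta\,\|H\|_\infty$ directly, and the constant $|2|_p^{-1}\beta$ is $<1$ for every $p$. This covers $p=2$ as well, where the lemma's proof relies on $|(s-1)/2|_p\le1$, which fails for $s\in2\mathbb Z_2$. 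The paper never checks $F\in\mathcal B$; it simply cites Theorem~\ref{thm:HK_1D}. You are right that this check is needed, but the verification you sketch does not work.

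The hypothesis $|a|_p>1$ only controls the ratio $\langle a+t\rangle/\langle a\rangle=1+t/a$. It says nothing about $|\langle a\rangle-1|_p$, which can be any value below $1$. Both the identity $\langle a+t\rangle^{1-s}=\exp\bigl((1-s)\log\langle a+t\rangle\bigr)$ and convergence of that exponential on all of $\mathbb Z_p$ require $|\langle a+t\rangle-1|_p<p^{-1/(p-1)}$. Without such a condition the membership can actually fail when $\mathcal B$ is the Tate algebra, as Definition~\ref{def:Bspace} states. Take $p$ odd, $\varepsilon$ a primitive $p$-th root of unity, and $a=\varepsilon/p^2$. Then $|a|_p=p^2>2p^{1/(p-1)}$, $\omega_v(a)=p^{-2}$, and $\langle a+t\rangle=\varepsilon(1+p^2t\varepsilon^{-1})$. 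Hence $F(s)=\varepsilon^{1-s}G(s)$ with $G(s)=\int_{\mathbb Z_p}(1+p^2t\varepsilon^{-1})^{1-s}\,d\mu_{-1}(t)-1$, which is a genuine element of the Tate algebra (since $|p^2t|_p<p^{-1/(p-1)}$). Moreover $G(0)=p^2\varepsilon^{-1}\int_{\mathbb Z_p}t\,d\mu_{-1}(t)=-p^2\varepsilon^{-1}/2\neq0$. Since $\varepsilon^{1-s}$ is constant on each class $c+p\mathbb Z_p$, if $F$ were a single power series then $F-\varepsilon^{1-c}G$ would have infinitely many zeros in $\mathbb Z_p$. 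Strassmann's theorem would then give $F=\varepsilon^{1-c}G$ for every $c$, forcing $G\equiv0$, a contradiction.

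Your fallback does not rescue this either: local analyticity from \cite{KS} plus compactness gives boundedness, which is automatic anyway, but not a single power series on $\mathbb Z_p$. Two repairs are available. One is to add the hypothesis $|\langle a\rangle-1|_p<p^{-1/(p-1)}$; your $\exp$--$\log$ expansion then goes through, since $|t/a|_p<p^{-1/(p-1)}$ already. The other is to observe that your contraction argument never uses analyticity, only boundedness and continuity at $s=1$. So uniqueness holds in $C(\mathbb Z_p,\mathbb C_p)$ with the sup norm and $T_p^a$ acting by shifts, and this space does contain $F$ for odd $p$, because the integrand is jointly continuous and $\mu_{-1}$ is bounded.
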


\begin{proof}
From \eqref{eq:hom_3.3} and the definition of $T_p^a$, we have
\[
\frac{2}{s-1}H(s) + \sum_{n=1}^{\infty} P_{p,n}^a(s) H(s+n) = 0.
\]
Solving for $H(s)$, we obtain the recursion
\begin{equation}
H(s) = -\frac{s-1}{2} \sum_{n=1}^{\infty} P_{p,n}^a(s) H(s+n).
\label{eq:recursion}
\end{equation}
Taking $p$-adic absolute values and using the  triangle inequality, we get
\[
|H(s)|_p \le \left|\frac{s-1}{2}\right|_p \sum_{n=1}^{\infty} |P_{p,n}^a(s)|_p |H(s+n)|_p.
\]
Since $H\in\mathcal B$, we have $|H(s+n)|_p \le \|H\|_\infty$ for all $n\ge1$. Thus
\begin{equation}\label{HS}
|H(s)|_p \le \|H\|_\infty \left|\frac{s-1}{2}\right|_p \sum_{n=1}^{\infty} |P_{p,n}^a(s)|_p
= \|H\|_\infty \kappa_a(s).
\end{equation}
Since $H$ is continuous on $\mathbb Z_p$, we have
\[
\|H\|_\infty=\sup_{s\in\mathbb Z_p}|H(s)|_p
=\sup_{s\in\mathbb Z_p\setminus\{1\}}|H(s)|_p.
\]
Taking the supremum over $s\in\mathbb Z_p\setminus\{1\}$ in the estimate (\ref{HS}) and applying Lemma~\ref{lem:contraction}, we obtain
\[
\|H\|_\infty \le q\,\|H\|_\infty,
\]
where $0<q<1$. This forces $\|H\|_\infty=0$, hence $H\equiv0$ on $\mathbb Z_p$.\end{proof}

\begin{remark}
The condition $|a|_p > 2\,p^{1/(p-1)}$ is sufficient but not necessary. The essential requirement is that the operator $T_p^a$ restricted to the homogeneous subspace is a contraction. In many cases, a weaker condition on $|a|_p$ may still yield uniqueness, but the above condition is explicit and guarantees the required decay of the coefficients $P_{p,n}^a(s)$.
\end{remark}

As an immediate corollary of Theorem~\ref{thm:unique1d}, we obtain the uniqueness of the bounded solution to the full non-homogeneous equation.

\begin{corollary}
\label{cor:unique_nonhom}
For $|a|_p > 2\,p^{1/(p-1)}$, the function
\[
F(s)=\zeta_{p,E}(s,a)-\langle a\rangle^{1-s}
\]
is the \emph{unique} solution in $\mathcal B$ to the equation
\[
T_p^a[F](s)=R(s), \qquad R(s)=\frac{1}{s-1}\left( \langle a-1\rangle^{1-s} - \langle a\rangle^{1-s} \right).
\]
\end{corollary}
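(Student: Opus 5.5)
The plan is to derive the corollary by combining the existence statement, Theorem~\ref{thm:HK_1D}, with the uniqueness statement, Theorem~\ref{thm:unique1d}, via linearity of $T_p^a$.

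\emph{Existence.} Since $|a|_p > 2p^{1/(p-1)} > 1$, Theorem~\ref{thm:HK_1D} applies and gives $T_p^a[F](s)=R(s)$ for every $s\in\mathbb Z_p\setminus\{1\}$. We must also check that $F$ lies in $\mathcal B$. Because $|a|_p>1$, the disc $a+\mathbb Z_p$ lies in the region $\{|x|_p=|a|_p\}$, where $\langle\cdot\rangle$ is locally analytic. So $\langle a+t\rangle = 1+u(t)$ with $|u(t)|_p\le p^{-1}|a|_p^{-1}$, say, and more precisely $|u(t)|_p<p^{-1/(p-1)}$. Expanding
\[
\langle a+t\rangle^{1-s}=\exp\bigl((1-s)\log\langle a+t\rangle\bigr),
\]
we get an analytic function of $s$ on $\mathbb Z_p$, bounded by $1$, and uniformly so in $t$. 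The fermionic integral (a limit of alternating sums) preserves uniform bounds, and it preserves analyticity via termwise integration of the power-series coefficients in $s$, since these converge uniformly. The same reasoning applies to $\langle a\rangle^{1-s}$. Hence $F\in\mathcal B$. This point is implicit in the paper's setup and is the only part that requires care; if the $\exp\log$ radius bound is delicate for ramified $a$, I would fall back on the binomial series with $|\langle a+t\rangle-1|_p<1$, which suffices because $s\in\mathbb Z_p$.

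\emph{Uniqueness.} Suppose $G\in\mathcal B$ also satisfies $T_p^a[G]=R$ on $\mathbb Z_p\setminus\{1\}$. Set $H=G-F\in\mathcal B$, which is possible since $\mathcal B$ is a vector space. The series defining $T_p^a[G]$ and $T_p^a[F]$ converge absolutely: their terms are bounded by $\|\cdot\|_\infty\,\beta^n$ by the estimate in the proof of Lemma~\ref{lem:contraction}. Both series are evaluated at the same points $s+n\in\mathbb Z_p$, so we may subtract them termwise, which gives $T_p^a[H](s)=0$ for $s\ne1$. Theorem~\ref{thm:unique1d} then yields $H\equiv0$, that is, $G=F$.

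The main obstacle is the membership $F\in\mathcal B$ in the existence step; the linearity argument is routine.
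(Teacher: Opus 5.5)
Your logical skeleton is exactly the paper's proof: existence comes from Theorem~\ref{thm:HK_1D}, and uniqueness comes from applying Theorem~\ref{thm:unique1d} to the difference of two solutions in $\mathcal B$. Your remark that both series converge absolutely, so they may be subtracted termwise, just spells out the linearity the paper uses. The paper never checks that $F\in\mathcal B$; it takes this for granted. You are right to flag it, but your verification does not work as written.

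Since $|t|_p\le 1<|a|_p$, one has $\omega_v(a+t)=\omega_v(a)$. Hence $\langle a+t\rangle=\langle a\rangle(1+t/a)$ and
\[
u(t)=\bigl(\langle a\rangle-1\bigr)+t/\omega_v(a).
\]
The first term is not controlled by $|a|_p$. For $a=p^{-2}(1+p)$ you get $|u(0)|_p=p^{-1}>p^{-1}|a|_p^{-1}$. For general $a\in\mathbb C_p$, $\langle a\rangle$ can be any element of $D$ (take $a=p^{-2}d$ with $d\in D$). So $|u(t)|_p<p^{-1/(p-1)}$ also fails, and the $\exp\log$ expansion is unavailable. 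Your binomial fallback only shows that $s\mapsto\langle a+t\rangle^{1-s}$ is bounded, continuous and locally analytic. It does not give a single power series convergent on all of $\mathbb Z_p$, which is what the Tate-algebra description of $\mathcal B$ demands.

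This cannot be repaired under that reading of $\mathcal B$. Take $a=p^{-2}\zeta_p$, with $\zeta_p$ a primitive $p$-th root of unity and $p$ odd. Then $\langle a\rangle=\zeta_p$, and using $\int_{\mathbb Z_p}d\mu_{-1}=1$ you get $F(s)=\zeta_p^{1-s}G(s)$, where $G(s)=\int_{\mathbb Z_p}\bigl((1+t/a)^{1-s}-1\bigr)\,d\mu_{-1}(t)$. Here $G$ is a convergent power series on $\mathbb Z_p$ with $G(0)=-1/(2a)\neq 0$. Since $F=\zeta_p G$ on $p\mathbb Z_p$ and $F=G$ on $1+p\mathbb Z_p$, Strassmann's theorem shows $F$ is not a single convergent power series on $\mathbb Z_p$, even though $|a|_p=p^2>2p^{1/(p-1)}$.

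So the step $F\in\mathcal B$, in your proof and implicitly in the paper's, is valid in two situations. The first is if $\mathcal B$ is read as bounded locally analytic, or even just bounded continuous, functions; that is all the proof of Theorem~\ref{thm:unique1d} actually uses. The second is under an extra hypothesis forcing $|\langle a\rangle-1|_p<p^{-1/(p-1)}$, for example $a$ in a finite extension with ramification index $e<p-1$, as in Theorem~\ref{thm:conv}. In that second case your $\exp\log$ argument goes through with the corrected bound $|u(t)|_p\le\max\bigl(|\langle a\rangle-1|_p,\,|a|_p^{-1}\bigr)<p^{-1/(p-1)}$.
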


\begin{proof}
Existence is precisely Theorem~\ref{thm:HK_1D}. If $F_1$ and $F_2$ are two solutions in $\mathcal B$, their difference $H:=F_1-F_2$ satisfies the homogeneous equation \eqref{eq:hom_3.3}. By Theorem~\ref{thm:unique1d}, $H\equiv0$, hence $F_1=F_2$.
\end{proof}

\section{Higher-dimensional differential equation}\label{Section 4}

\subsection{The multiple zeta function and partial zeta functions}

Having reformulated the one-dimensional equation as a convolution with a distribution kernel \(K_s = \sum_{n\ge0} P_{p,n}^a(s)\delta(t-s-n)\), we now extend this construction to \(n\) variables. A naive higher-dimensional analogue would be to consider a single multi-variable zeta function
\[
\zeta_{p,E}^{(n)}(\mathbf S, a):= \int_{\mathbb Z_p^n} \prod_{i=1}^n \langle a+t_i \rangle^{1-s_i} \, d\mu_{-1}^{\otimes n},
\]
and apply a product of \(n\) one-dimensional kernels. However, the translation operator \(\exp_p(N_i D_{s_i})\) acts independently on each variable, and the constant term \(\langle a\rangle^{-N_i}\) is also independent. Consequently, when we expand the product
\[
\prod_{i=1}^n \left( \zeta_{p,E}(s_i+N_i,a) - \langle a\rangle^{1-s_i-N_i} \right),
\]
each term selects, for each variable \(i\), either the zeta function (which involves integration over \(t_i\)) or the constant term \(\langle a\rangle^{1-s_i-N_i}\) (which does not). This selection is naturally encoded by a subset \(I\subseteq\{1,\dots,n\}\): the variables in \(I\) are “activated” (integrated), while those outside \(I\) are evaluated at the background constant.

This observation leads to the definition of \emph{partial zeta functions} \(Z_I(\mathbf S,a)\), which integrate only over the variables belonging to \(I\). The alternating sum over all subsets \(I\) then emerges from the binomial expansion of the product, and the higher-dimensional operator \(T_p^{a,I}\) is the corresponding partial analogue of \(T_p^a\) acting only on the variables in \(I\). In this way, the multi-dimensional setting becomes a consequence of the product structure of the distribution kernel.

We now formalize these definitions.
\begin{definition}[Higher-dimensional Hurwitz-type Euler zeta functions]
\label{def:highzeta}
Let \(a \in \mathbb{C}_p\) with \(|a|_p > 1\) and  \(\mathbf{S} = (s_1,\dots,s_n) \in \mathbb{Z}_{p}^{n}\).
For each subset \(I\subseteq\{1,\dots,n\}\), define the \emph{higher-dimensional  Hurwitz-type Euler zeta functions} by
\begin{equation}\label{eq:partial_zeta}
\begin{aligned}
Z_I(\mathbf S,a):=\left(\prod_{i\in I}\int_{\mathbb Z_p}\langle a+t_i\rangle^{1-s_i}\,d\mu_{-1}(t_i)\right)
\cdot
\left(\prod_{i\notin I}\langle a\rangle^{1-s_i}\right).
\end{aligned}
\end{equation}
\end{definition}

\begin{definition}[Higher-dimensional operator]
\label{def:highop}
Let $a \in \mathbb{C}_p$ with $|a|_p > 1$. For each subset $I\subseteq\{1,\dots,n\}$, we define the \emph{higher-dimensional operator} $T_p^{a,I}$ acting on functions $f(\mathbf S)$ (with \(\mathbf{S} = (s_1,\dots,s_n) \in \mathbb{Z}_{p}^{n}\)) by
\begin{equation}\label{def:partial_op}
\begin{aligned}
T_p^{a,I} f(\mathbf S)
&:=
\sum_{N_1,\dots,N_n \ge 0}
\left(\prod_{i=1}^n P_{p,N_i}^a(s_i)\right)\\
&\quad \times
\left(\prod_{i\in I} \exp_p(N_i D_{s_i})\right)
\left(\prod_{i\notin I} \langle a\rangle^{-N_i}\right) f(\mathbf S),
\end{aligned}
\end{equation}
where
\[
\exp_p(N_i D_{s_i}) f(\mathbf S) = f(s_1,\dots,s_i+N_i,\dots,s_n)
\]
is the $p$-adic translation operator, and for $i\notin I$, the factor $\langle a\rangle^{-N_i}$ denotes multiplication by the constant $\langle a\rangle^{-N_i}$.
\end{definition}

The following proposition formalizes the preceding heuristic observation. Part~(i) gives the natural tensor product expansion of the kernel. Part~(ii), obtained by applying the binomial theorem to the expansion in part~(i), yields the higher-dimensional analogue of the one-dimensional equation and is the main result of this paper.

\begin{theorem}[Tensor product expansion and M\"obius inversion]
\label{prop:tensor_main}
Let \(a\in\mathbb C_p\) with \(|a|_p>1\). Let \(K_s(t)=\sum_{n=0}^{\infty}P_{p,n}^a(s)\delta(t-s-n)\) be the one-dimensional distribution kernel, and let
\[
K_{\mathbf S}^{(n)} := \bigotimes_{i=1}^n K_{s_i}
\]
be its \(n\)-fold tensor product. Then the following two identities hold:

\begin{enumerate}
\item[(i)] \textit{Tensor product expansion:}
\begin{equation}\label{eq:tensor_exp_main}
\bigl(K_{\mathbf S}^{(n)} * Z_{\{1,\dots,n\}}\bigr)(\mathbf S)
=
\sum_{I\subseteq\{1,\dots,n\}}
T_p^{a,I}\bigl[Z_I(\mathbf S,a)\bigr],
\end{equation}
where \(Z_I\) is as defined in Definition~\ref{def:highzeta}.

\item[(ii)] \textit{M\"obius inversion (alternating-sum form):}
\begin{equation}\label{eq:mobius_main}
\sum_{I\subseteq\{1,\dots,n\}} (-1)^{n-|I|}
T_p^{a,I}\bigl[Z_I(\mathbf S,a)\bigr]
=
\prod_{i=1}^n
\left[
\frac{1}{s_i-1}
\left( \langle a-1\rangle^{1-s_i} - \langle a\rangle^{1-s_i} \right)
\right].
\end{equation}
\end{enumerate}
\end{theorem}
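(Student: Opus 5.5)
The plan is to reduce everything to the one-dimensional identity of Theorem~\ref{thm:HK_1D}, using the product structure of both the kernel and the zeta functions. Write $\zeta_i := \zeta_{p,E}(\cdot,a)$ acting in the variable $s_i$, and $c_i(s_i):=\langle a\rangle^{1-s_i}$.

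\textbf{Step 1: factorization.} Since $K^{(n)}_{\mathbf S}=\bigotimes_i K_{s_i}$ and $Z_{\{1,\dots,n\}}(\mathbf S,a)=\prod_i \zeta_{p,E}(s_i,a)$ is a product of one-variable functions, the convolution factors:
\[
\bigl(K^{(n)}_{\mathbf S}*Z_{\{1,\dots,n\}}\bigr)(\mathbf S)=\prod_{i=1}^n \sum_{N_i\ge0}P^a_{p,N_i}(s_i)\,\zeta_{p,E}(s_i+N_i,a).
\]
Each one-variable series converges absolutely by the estimate \eqref{est}, and $\zeta_{p,E}(\cdot,a)$ is bounded on $\mathbb Z_p$, since its integrand has absolute value $1$ and $I_{-1}$ is bounded. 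Because $\mathbb C_p$ is non-Archimedean, a product of finitely many convergent series equals the multiple series over $(N_1,\dots,N_n)$, summed in any order. This justifies all rearrangements below.

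\textbf{Step 2: part (i).} In each factor, decompose $\zeta_{p,E}(s_i+N_i,a)=F(s_i+N_i)+\langle a\rangle^{1-s_i-N_i}$, where $F$ is the shifted zeta function. The key point is the identity $\langle a\rangle^{1-s_i-N_i}=\langle a\rangle^{-N_i}\langle a\rangle^{1-s_i}$, so the constant term is exactly ``multiplication by $\langle a\rangle^{-N_i}$'' applied to the background factor $c_i$.

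Expanding the product over $i$ gives a sum over subsets $I$, where $I$ records the factors in which we keep the translated term. Comparing with Definitions~\ref{def:highzeta} and \ref{def:highop}, the term indexed by $I$ is $T^{a,I}_p$ applied to $\prod_{i\in I}(\text{translated})\prod_{i\notin I}c_i$, which is $T_p^{a,I}[Z_I]$.

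There is one bookkeeping subtlety: whether the translated object in the $I$-factors is $\zeta$ or $F$. I will set this up so that part (i) reads as stated with $Z_I$ containing $\zeta$ on $I$. Concretely, I expand $\prod_i\bigl(A_i+B_i\bigr)$ with $A_i=\sum_N P\,\exp_p(ND)\zeta_i$ and $B_i=\sum_N P\langle a\rangle^{-N}c_i$. Both are legitimate one-dimensional convergent series. Identity (i) is then the statement that $T_p^{a,I}[Z_I]=\prod_{i\in I}A_i\prod_{i\notin I}B_i$, and the left-hand side of (i) must be interpreted accordingly. If the paper's convention is instead that the left side equals $\prod_i(A_i+B_i)$, then (i) is literally the distributive law. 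I will state carefully which grouping is meant.

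\textbf{Step 3: part (ii).} The same factorization shows
\[
\sum_I(-1)^{n-|I|}T_p^{a,I}[Z_I]=\prod_{i=1}^n(A_i-B_i),
\]
by the binomial (inclusion--exclusion) identity for the product. Moreover,
\[
A_i-B_i=\sum_{N}P^a_{p,N}(s_i)\bigl(\zeta_{p,E}(s_i+N,a)-\langle a\rangle^{1-s_i-N}\bigr)=T_p^a[F](s_i).
\]
By Theorem~\ref{thm:HK_1D}, this equals $\frac1{s_i-1}\bigl(\langle a-1\rangle^{1-s_i}-\langle a\rangle^{1-s_i}\bigr)$ for $s_i\ne1$. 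Taking the product over $i$ gives \eqref{eq:mobius_main}.

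\textbf{Main obstacle.} The expected difficulty is not algebraic. It is the convergence and rearrangement justification: each $T^{a,I}_p[Z_I]$ must be a convergent multiple series that genuinely factors as a product of one-variable series. I would handle this with the uniform bound $|P^a_{p,N}(s)|_p\le p^{Nv_p(a)}N\to0$, together with $|\langle a\rangle^{-N}|_p=1$ and the boundedness of $\zeta_{p,E}$. Under these bounds, the general term of the $n$-fold series tends to $0$ as $\max N_i\to\infty$. Unconditional convergence in an ultrametric field then permits the factorization.
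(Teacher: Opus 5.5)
Your treatment of (ii) is correct and follows the paper's route. You evaluate each $T_p^{a,I}[Z_I]$ directly from Definition~\ref{def:highop} as $\prod_{i\in I}A_i\prod_{i\notin I}B_i$, use the signed binomial identity to get $\prod_i(A_i-B_i)$, note $A_i-B_i=T_p^a[F](s_i)$, and apply Theorem~\ref{thm:HK_1D}. Your justification of the factorization, via boundedness of $\zeta_{p,E}(\cdot,a)$, $|\langle a\rangle^{-N}|_p=1$ and \eqref{est}, is more complete than the paper's. Nothing in (ii) depends on (i).

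Part (i) is not established, and the ``bookkeeping subtlety'' you flag is exactly where it breaks. Step~1 correctly gives the left-hand side as $\prod_i A_i$. Your own evaluation of the operators gives $\sum_I T_p^{a,I}[Z_I]=\prod_i(A_i+B_i)$.

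\begin{itemize}
\item In your first reading, substituting $\zeta_{p,E}=F+\langle a\rangle^{1-\cdot}$ and expanding does write $\prod_i A_i$ as a sum over $I$. But the $I$-term is $T_p^{a,I}$ applied to $\prod_{i\in I}F(s_i)\prod_{i\notin I}\langle a\rangle^{1-s_i}$, not to $Z_I$. So ``which is $T_p^{a,I}[Z_I]$'' is false.
\item Your second reading simply replaces the left-hand side, which is fixed by the definitions.
\end{itemize}

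No choice of grouping can rescue the statement, because (i) as written is false. For $n=1$ it is equivalent to $B_1=\sum_{N\ge0}P_{p,N}^a(s)\langle a\rangle^{1-s-N}=0$. Using $\omega_v(a)\langle a\rangle=a$, $\langle a-1\rangle=\langle a\rangle(1-a^{-1})$ and the binomial series, one gets
\[
B_1=\frac{1}{s-1}\Bigl(\langle a\rangle^{1-s}+\langle a-1\rangle^{1-s}\Bigr).
\]
This is nonzero for odd $p$, since the bracket is $\equiv 2$ modulo the maximal ideal. Concretely, at $s=0$ only $N=0,1$ contribute and $B_1=-2\langle a\rangle+\omega_v(a)^{-1}$. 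As a consistency check, the fermionic relation gives $A_1=\frac{2}{s-1}\langle a-1\rangle^{1-s}$, and $A_1-B_1$ is exactly the right side of Theorem~\ref{thm:HK_1D}.

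The paper's proof of (i) makes the very conflation you noticed. It selects the difference $\zeta_{p,E}(s_i+N_i,a)-\langle a\rangle^{1-s_i-N_i}$ for $i\in I$, then identifies it with $\exp_p(N_iD_{s_i})$ applied to $\zeta_{p,E}(s_i,a)$.

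What is actually true is one of the following:
\begin{itemize}
\item $\bigl(K^{(n)}_{\mathbf S}*Z_{\{1,\dots,n\}}\bigr)(\mathbf S)=\sum_I T_p^{a,I}[\widetilde Z_I]$, where $\widetilde Z_I$ is obtained from $Z_I$ by replacing $\zeta_{p,E}(s_i,a)$ with $F(s_i)$ for $i\in I$;
\item $\sum_I T_p^{a,I}[Z_I]$ equals the convolution of $K^{(n)}_{\mathbf S}$ with $\prod_i\bigl(\zeta_{p,E}(s_i,a)+\langle a\rangle^{1-s_i}\bigr)$.
\end{itemize}
Rather than hedging between readings, you should state that (i) fails as written and prove one of these corrected versions. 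Your Step~2 already contains the argument for either one.
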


\begin{remark}
The tensor product of distributions is well defined in the spaces $A'$ and $A_0'$; see \cite[Sec.~3]{Khrennikov1991superspace}. This allows us to form the $n$-fold tensor product kernel $K_{\mathbf S}^{(n)}$ directly from the one-dimensional kernels.
\end{remark}

\begin{proof}
(i) By the definition of the tensor product of distributions,
\[
K_{\mathbf S}^{(n)} = \sum_{\mathbf N\in\mathbb N^n}
\left(\prod_{i=1}^n P_{p,N_i}^a(s_i)\right)
\bigotimes_{i=1}^n \delta(t_i-s_i-N_i).
\]
By \eqref{eq:HuKim_zeta} and \eqref{eq:partial_zeta}, we have $Z_{\{1,\dots,n\}}(\mathbf S,a)=\prod_{i=1}^n \zeta_{p,E}(s_i,a)$, and the convolution with the tensor product of deltas yields
\[
\left(\bigotimes_{i=1}^n \delta(t_i-s_i-N_i) * Z_{\{1,\dots,n\}}\right)(\mathbf S)
=
\prod_{i=1}^n \zeta_{p,E}(s_i+N_i,a).
\]
Hence
\[
\bigl(K_{\mathbf S}^{(n)} * Z_{\{1,\dots,n\}}\bigr)(\mathbf S)
=
\sum_{\mathbf N\in\mathbb N^n}
\left(\prod_{i=1}^n P_{p,N_i}^a(s_i)\right)
\prod_{i=1}^n \zeta_{p,E}(s_i+N_i,a).
\]
For each \(i\), write
\[
\zeta_{p,E}(s_i+N_i,a)
=
\bigl(\zeta_{p,E}(s_i+N_i,a)-\langle a\rangle^{1-s_i-N_i}\bigr)
+
\langle a\rangle^{1-s_i-N_i}.
\]
Expanding the product over \(i=1,\dots,n\) and collecting terms according to the subsets \(I\subseteq\{1,\dots,n\}\) for which the first term is selected gives precisely the right-hand side of (i), by Definition~\ref{def:highop}. Indeed, if \(i\in I\), the selected term is the difference and the operator \(\exp_p(N_i D_{s_i})\) applied to \(\zeta_{p,E}(s_i,a)\) produces \(\zeta_{p,E}(s_i+N_i,a)\); if \(i\notin I\), the selected term is \(\langle a\rangle^{1-s_i-N_i}\), which equals \(\langle a\rangle^{-N_i}\) times the background factor \(\langle a\rangle^{1-s_i}\) present in \(Z_I\). This proves (i).

(ii) Starting from (i), we expand each term \(T_p^{a,I}[Z_I(\mathbf S,a)]\) using Definition~\ref{def:highop}. Multiplying both sides of \eqref{def:partial_op} by \((-1)^{n-|I|}\) and summing over all \(I\subseteq\{1,\dots,n\}\), we obtain
\[
\begin{aligned}
&\sum_{I\subseteq\{1,\dots,n\}} (-1)^{n-|I|}T_p^{a,I}\bigl[Z_I(\mathbf S,a)\bigr] \\
&=\sum_{\mathbf N\in\mathbb N^n}\left(\prod_{i=1}^n P_{p,N_i}^a(s_i)\right)\sum_{I\subseteq\{1,\dots,n\}} (-1)^{n-|I|} \\
&\quad\times\left(\prod_{i\in I}\zeta_{p,E}(s_i+N_i,a)\right)
\left(\prod_{i\notin I}\langle a\rangle^{1-s_i-N_i}\right).
\end{aligned}
\]
Applying the binomial identity
\[
\sum_{I\subseteq\{1,\dots,n\}} (-1)^{n-|I|}
\left(\prod_{i\in I}x_i\right)\left(\prod_{i\notin I}y_i\right)
=
\prod_{i=1}^n (x_i - y_i),
\]
with \(x_i=\zeta_{p,E}(s_i+N_i,a)\) and \(y_i=\langle a\rangle^{1-s_i-N_i}\), yields
\[
\sum_{I\subseteq\{1,\dots,n\}} (-1)^{n-|I|}
T_p^{a,I}\bigl[Z_I(\mathbf S,a)\bigr]
=
\sum_{\mathbf N\in\mathbb N^n}
\prod_{i=1}^n
\left[
P_{p,N_i}^a(s_i)
\left(
\zeta_{p,E}(s_i+N_i,a)-\langle a\rangle^{1-s_i-N_i}
\right)
\right].
\]

Since \(|a|_p>1\), the estimate \eqref{est} gives
\[
|P_{p,N}^a(s)|_p \le p^{N v_p(a)} N
\]
for all \(N\ge1\). As \(v_p(a)<0\), the series \(\sum_{N=0}^\infty |P_{p,N}^a(s)|_p\) converges. Consequently, the multiple sum is absolutely convergent, and it factors as a product of independent single sums:
\[
\sum_{\mathbf N\in\mathbb N^n}
\prod_{i=1}^n A_i(N_i)
=
\prod_{i=1}^n
\left(
\sum_{N=0}^{\infty} A_i(N)
\right),
\]
where
\[
A_i(N):=P_{p,N}^a(s_i)\left(\zeta_{p,E}(s_i+N,a)-\langle a\rangle^{1-s_i-N}\right).
\]

For each \(i\), the one-dimensional theorem (see Theorem~\ref{thm:HK_1D}) gives
\[
\sum_{N=0}^{\infty} A_i(N)
=
\frac{1}{s_i-1}
\left( \langle a-1\rangle^{1-s_i} - \langle a\rangle^{1-s_i} \right).
\]
Substituting this into the product yields
\[
\sum_{I\subseteq\{1,\dots,n\}} (-1)^{n-|I|}
T_p^{a,I}\bigl[Z_I(\mathbf S,a)\bigr]
=
\prod_{i=1}^n
\left[
\frac{1}{s_i-1}
\left( \langle a-1\rangle^{1-s_i} - \langle a\rangle^{1-s_i} \right)
\right].
\]
This proves (ii).
\end{proof}

\begin{remark}[Degeneration to one dimension]
\label{rem:degeneration}
When \(n=1\), the subsets of \(\{1\}\) are \(I=\emptyset\) and \(I=\{1\}\). The alternating sum in \eqref{eq:mobius_main} reduces to
\[
T_p^{a,\{1\}}[Z_{\{1\}}] - T_p^{a,\emptyset}[Z_{\emptyset}].
\]
Since \(Z_{\{1\}}(s,a)=\zeta_{p,E}(s,a)\) and \(Z_{\emptyset}(s,a)=\langle a\rangle^{1-s}\), by Definition~\ref{def:highop} we have
\[
\begin{aligned}
T_p^{a,\{1\}}[Z_{\{1\}}] - T_p^{a,\emptyset}[Z_{\emptyset}]
&=
\sum_{N=0}^{\infty} P_{p,N}^a(s)\bigl(\zeta_{p,E}(s+N,a)-\langle a\rangle^{1-s-N}\bigr) \\
&= T_p^a\bigl[\zeta_{p,E}(\cdot,a)-\langle a\rangle^{1-\cdot}\bigr](s).
\end{aligned}
\]
Thus the left-hand side of \eqref{eq:mobius_main} is precisely the one-dimensional operator \(T_p^a\) applied to \(F(s)=\zeta_{p,E}(s,a)-\langle a\rangle^{1-s}\), and the right-hand side becomes
\[
\frac{1}{s-1}\left( \langle a-1\rangle^{1-s} - \langle a\rangle^{1-s} \right).
\]
Hence Theorem~\ref{prop:tensor_main}(ii) reduces exactly to the one-dimensional Theorem~\ref{thm:HK_1D}.
\end{remark}

We now turn to the question of convergence. The following theorem shows that the alternating sum in Theorem~\ref{prop:tensor_main}(ii) converges $p$-adically, thereby justifying the infinite series in the main identity.

\begin{theorem}[Convergence region]
\label{thm:conv}
Let $K/\mathbb{Q}_p$ be a finite extension with ramification index $e < p-1$. 
Let $a \in K$ with $|a|_p > 1$, and let $s_i \in \mathbb{Z}_p$ with $s_i \neq 1$ for all $i=1,\dots,n$. 
Then the alternating sum in Theorem~\ref{prop:tensor_main}(ii)  converges in the $p$-adic topology. 
Moreover, the convergence is uniform on compact subsets of 
\[
\{\mathbf S \in \mathbb{Z}_p^n : s_i \neq 1 \text{ for all } i\}.
\]
\end{theorem}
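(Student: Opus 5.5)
The approach is to bound the general term of the multiple series over $\mathbf N\in\mathbb N^n$ that arises in the proof of Theorem~\ref{prop:tensor_main}(ii). In a non-Archimedean field, a series converges as soon as its general term tends to zero. Uniform convergence on a compact set $C$ follows once the general term tends to zero uniformly in $\mathbf S\in C$.

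First I would rewrite the alternating sum, for each fixed $I$, as the series over $\mathbf N$ from Definition~\ref{def:highop}. After the binomial identity this becomes
\[
\sum_{\mathbf N\in\mathbb N^n}\prod_{i=1}^n P_{p,N_i}^a(s_i)\bigl(\zeta_{p,E}(s_i+N_i,a)-\langle a\rangle^{1-s_i-N_i}\bigr).
\]
I would actually bound each of the $2^n$ series $T_p^{a,I}[Z_I]$ separately, since that is stronger and avoids any rearrangement issue. The general term of the $I$-series is
\[
\prod_{i} P_{p,N_i}^a(s_i)\cdot\prod_{i\in I}\zeta_{p,E}(s_i+N_i,a)\cdot\prod_{i\notin I}\langle a\rangle^{1-s_i-N_i}.
\]

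Second, I would bound the non-coefficient factors uniformly. Since $\langle a\rangle$ and $\langle a+t\rangle$ are principal units, for $s\in\mathbb Z_p$ the binomial series gives $|\langle x\rangle^{1-s}|_p=1$. Hence $|\langle a\rangle^{1-s_i-N_i}|_p=1$. The fermionic integral of a function bounded by $1$ is bounded by $1$, because it is a limit of sums with coefficients $\pm1$. So $|\zeta_{p,E}(s_i+N_i,a)|_p\le 1$ for all $N_i$ and all $s_i\in\mathbb Z_p$. The general term is therefore bounded in absolute value by $\prod_i|P_{p,N_i}^a(s_i)|_p$.

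Third, I would bound the coefficients. Here the hypothesis $e<p-1$ enters. It is exactly what \cite[(3.13)]{HuKim2021} uses to get the estimate \eqref{est}, namely $|P^a_{p,N}(s)|_p\le p^{Nv_p(a)}N$ for $N\ge1$. This estimate is uniform in $s\in\mathbb Z_p$, since it only uses $|s-1+j|_p\le1$. For $N=0$, the coefficient $2/(s_i-1)$ is not uniformly bounded near $s_i=1$. On a compact $C$ avoiding the hyperplanes $s_i=1$, however, $|s_i-1|_p\ge\delta>0$, so $|P^a_{p,0}(s_i)|_p\le |2|_p/\delta$. Writing $c_N:=\max(|2|_p/\delta,\;p^{Nv_p(a)}N)$, the general term is at most $\prod_i c_{N_i}$ uniformly on $C$. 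Because $v_p(a)<0$, we have $c_N\to0$ as $N\to\infty$, while all $c_N$ stay bounded. Hence $\prod_i c_{N_i}\to0$ as $|\mathbf N|\to\infty$: at least one $N_i$ must go to infinity, and the other factors stay bounded.

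The main obstacle is the uniformity near $s_i=1$, which is why compact subsets of the punctured domain are required. Pointwise convergence for fixed $\mathbf S$ is the case $C=\{\mathbf S\}$. Summing over the finitely many $I$ gives convergence of the alternating sum, and its value is independent of summation order by the ultrametric property.
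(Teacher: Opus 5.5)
Your argument is correct in substance, but it is organized differently from the paper's. The paper fixes $I$ and rewrites $T_p^{a,I}[Z_I(\mathbf S,a)]$ as a product of one-dimensional series: $\sum_N P_{p,N}^a(s_i)\,\zeta_{p,E}(s_i+N,a)$ for $i\in I$ and $\sum_N P_{p,N}^a(s_i)\,\langle a\rangle^{1-s_i-N}$ for $i\notin I$. It then quotes \cite[Corollary 3.8]{HuKim2021} and the estimate from the proof of \cite[Lemma 3.4]{HuKim2021} to get uniform convergence of each factor on compact sets. It finishes because finite products and finite linear combinations preserve uniform convergence.

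You instead bound the general term of the $n$-fold series of Definition~\ref{def:highop} directly, using $|\langle a\rangle^{1-s_i-N_i}|_p=1$ and $|\zeta_{p,E}(s_i+N_i,a)|_p\le 1$. This makes the proof self-contained. It also supplies a step the paper leaves tacit: the $n$-fold series converges unconditionally, which is what justifies factoring it into a product of one-dimensional sums. The paper's route is shorter and keeps the product structure in view, but it rests on the cited one-dimensional results.

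Three details need repair.

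First, $c_N:=\max(|2|_p/\delta,\,p^{Nv_p(a)}N)$ is bounded below by $|2|_p/\delta$, so it does not tend to $0$. You mean $c_0:=|2|_p/\delta$ and $c_N:=p^{Nv_p(a)}N$ for $N\ge1$.

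Second, \eqref{est} does not follow from $|s-1+j|_p\le1$ alone. That yields only $|P_{p,N}^a(s)|_p\le p^{Nv_p(a)}/|N!|_p$, which is unbounded along $N=p^k$ when $1<|a|_p<p^{1/(p-1)}$. The correct reason is the identity $\prod_{j=1}^{N-1}(s-1+j)/N!=\frac1N\binom{s+N-2}{N-1}$, with $\binom{s+N-2}{N-1}\in\mathbb Z_p$ for $s\in\mathbb Z_p$. This gives $|P_{p,N}^a(s)|_p\le |N|_p^{-1}p^{Nv_p(a)}\le N\,p^{Nv_p(a)}$, still uniformly in $s$.

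Third, this identity together with $|\omega_v(a)|_p=|a|_p$ shows that \eqref{est} holds for every $a\in\mathbb C_p$ with $|a|_p>1$. So the estimate is not where $e<p-1$ enters. Your argument reads $\exp_p(N_iD_{s_i})$ as literal translation, as Definition~\ref{def:highop} does, and never uses that hypothesis. In the paper it enters only through the citation of \cite[Corollary 3.8]{HuKim2021}.
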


\begin{proof}
Fix an arbitrary subset $I\subseteq\{1,\dots,n\}$. By Definition~\ref{def:highop} and \eqref{eq:partial_zeta}, we have
\begin{equation}\label{prod_exp_new}
\begin{aligned} 
T_p^{a,I}[Z_I(\mathbf S,a)]
&=\prod_{i\in I} \left( \sum_{N=0}^{\infty} P_{p,N}^a(s_i)\,\zeta_{p,E}(s_i+N,a)\right)\\
&\quad\times\prod_{i\notin I} \left( \sum_{N=0}^{\infty} 
P_{p,N}^a(s_i)\,\langle a\rangle^{1-s_i-N} \right).
\end{aligned}
\end{equation}

For each $i\in I$, the inner series is exactly the one-dimensional series defining $T_p^a[\zeta_{p,E}(\cdot,a)](s_i)$:
\[
\sum_{N=0}^{\infty} P_{p,N}^a(s_i)\,\zeta_{p,E}(s_i+N,a),
\]
which converges uniformly on compact subsets of $\mathbb Z_p\setminus\{1\}$ by the one-dimensional convergence result \cite[Corollary 3.8]{HuKim2021}.

For each $i\notin I$, the inner series is
\[
\sum_{N=0}^{\infty} P_{p,N}^a(s_i)\,\langle a\rangle^{1-s_i-N},
\]
which also converges uniformly on compact subsets of $\mathbb Z_p$ by the same estimate:
\[
\left|
\frac{\prod_{j=0}^{N-1}(s_i-1+j)}{N!}
\,\omega_v(a)^{-N}
\,\langle a\rangle^{1-s_i-N}
\right|_p
\le
M_a' \, p^{N v_p(a)} N,
\]
(see the proof of \cite[Lemma 3.4]{HuKim2021}).
Thus each inner series in \eqref{prod_exp_new} converges uniformly on compact subsets of $\mathbb Z_p\setminus\{1\}$.

Finally, the alternating sum
\[
\sum_{I\subseteq\{1,\dots,n\}} (-1)^{n-|I|}
T_p^{a,I}[Z_I(\mathbf S,a)]
\]
is a finite linear combination of these uniformly convergent partial operators, hence converges uniformly on compact subsets of 
\[\left\{\mathbf S \in \mathbb{Z}_p^n : s_i \neq 1 \text{ for all } i\right\}.\]
This completes the proof.
\end{proof}

\end{document}